\documentclass[10pt,a4paper]{article}
\usepackage{csquotes}
\usepackage{amsmath,amsfonts,amssymb,amsthm}
\usepackage{graphicx,verbatim,hyperref,etoolbox,svg}
\usepackage{longtable,multirow,booktabs}
\usepackage{caption, subfig}
\usepackage[table,xcdraw]{xcolor}
\usepackage{listings}

\usepackage[top=2cm, bottom=2cm, left=3cm, right=3cm]{geometry}

\usepackage[noblocks]{authblk}	

\makeatletter
\def\maketitle{{%
  \renewenvironment{tabular}[2][]
    {\begin{flushleft}}
    {\end{flushleft}}
  \AB@maketitle}}
\makeatother

\newtheorem{theorem}{Theorem}[section]

\newtheorem{lemma}[theorem]{Lemma}
\newtheorem{remark}[theorem]{Remark}
\newtheorem{definition}[theorem]{Definition}
\theoremstyle{definition}

\DeclareMathOperator*{\argmax}{arg\,max}
\DeclareMathOperator*{\argmin}{arg\,min}

\usepackage[greek,english]{babel}
\usepackage{newunicodechar}
\newunicodechar{ℝ}{\mathbb{R}}
\newunicodechar{α}{\alpha}
\newunicodechar{β}{\beta}
\newunicodechar{δ}{\delta}
\newunicodechar{Δ}{\Delta}
\newunicodechar{ε}{\epsilon}
\newunicodechar{θ}{\vartheta}
\newunicodechar{Θ}{\Theta}
\newunicodechar{ψ}{\psi}
\newunicodechar{λ}{\lambda}
\newunicodechar{Π}{\Pi}
\newunicodechar{σ}{\sigma}
\newunicodechar{Ω}{\Omega}

\newunicodechar{∈}{\in}
\newunicodechar{⇒}{\Rightarrow}
\newunicodechar{→}{\to}
\newunicodechar{•}{\bullet}
\newunicodechar{≤}{\le}
\newunicodechar{≥}{\ge}
\newunicodechar{∀}{\forall}
\newunicodechar{∞}{\infty}

\usepackage[backend=biber,style=numeric-comp,sorting=ynt]{biblatex}
\title{ForwardFlow: Simulation only statistical inference using deep learning}

\author{
	Stefan Böhringer$^*$\\
		Department of Biomedical Data Sciences\\
		Leiden University Medical Center\\
		Einthovenweg 20, 2333 ZC Leiden, The Netherlands\\
		\texttt{s.boehringer@lumc.nl} \\
		$^*$corresponding author\\
}

\begin{document}

\maketitle


\begin{abstract}
    Deep learning models are being used for the analysis of parametric statistical models based on simulation-only frameworks. Bayesian models using normalizing flows simulate data from a prior distribution and are composed of two deep neural networks: a summary network that learns a sufficient statistic for the parameter and a normalizing flow that conditional on the summary network can approximate the posterior distribution.
Here, we explore frequentist models that are based on a single summary network. During training, input of the network is a simulated data set based on a parameter and the loss function minimizes the mean-square error between learned summary and parameter. The network thereby solves the inverse problem of parameter estimation. We propose a branched network structure that contains collapsing layers that reduce a data set to summary statistics that are further mapped through fully connected layers to approximate the parameter estimate. We motivate our choice of network structure by theoretical considerations.

In simulations we demonstrate three desirable properties of parameter estimates: finite sample exactness, robustness to data contamination, and algorithm approximation. These properties are achieved offering the the network varying sample size, contaminated data, and data needing algorithmic reconstruction during the training phase. In our simulations an EM-algorithm for genetic data is automatically approximated by the network.

Simulation only approaches seem to offer practical advantages in complex modeling tasks where the simpler data simulation part is left to the researcher and the more complex problem of solving the inverse problem is left to the neural network. Challenging future work includes offering pre-trained models that can be used in a wide variety of applications.
\end{abstract}


{\bf Declaration of interest:} The authors declare no conflict of interests.\par

\newpage

\section{Introduction}

Simulation based approaches have a long tradition in statistical inference problems and have recently become more prominent\cite{cranmer_frontier_2020}. Approximate Bayesian Computation (ABC) is an early example and has been employed in complex inference problems where it is difficult or impossible to evaluate the data likelihood \cite{beaumont_approximate_2002}. More recently, normalizing flows have been established as an alternative for Bayesian inference \cite{papamakarios_normalizing_2021}. To approximate the posterior distribution, normalizing flows do not need to evaluate the likelihood \cite{radev_bayesflow_2020}. Normalizing flows employ deep neural networks (DNNs) to establish the normalizing mapping through a training process using simulations \cite{radev_bayesflow_2020}. After training, sampling from the posterior can make use of the computationally efficient use of DNN inference \cite{radev_bayesflow_2023}. In these examples, the data likelihood does not have to be evaluated which is an advantage in practical modeling. Both ABC and normalizing flows require sufficient summary statistics to properly construct the posterior distribution. For ABC, summary statistics have to be user supplied and might require deep insight into the analysis problem. Normalizing flows employ internal networks, summary networks, to automatically construct summary statistics. An additional restriction on the network structure arises for normalizing flows in that the mapping between the normal and posterior distribution has to be bijective. Another advantage of simulation based approaches is that they can implicitly account for finite sample properties by training networks on appropriate sample sizes \cite{radev_bayesflow_2023}. Arguably, this is an advantage over other approximations such as variational Bayes \cite{blei_variational_2017}.

In this paper, we investigate simulation only approaches which are based on a single network which learns sufficient summaries for the data. The focus is on offering a simulation-only approach for parametric models that can be implemented with little effort put into hyper-parameter tuning \cite{raiaan_systematic_2024}. A second aspect is robustness, which can be easily added by training the estimator using contaminated data. In contrast to {\it BayesFlow} with a more complex network structure, training is faster and the summary network can be potentially also be simplified. These networks directly allow to implement frequentist inference which is explored. In a second step, Bayesian models can be recovered from frequentist models using ABC methodology.

Some previous work has investigated statistical properties of deep neural networks. Results are either based in idealized assumptions ({\it e.g.} \cite{cybenko_approximation_1989}) or concerned with model evaluation only ({\it e.g.} \cite{goodfellow_deep_2016}). In general, such investigations face the difficulty that current DNNs are quantized at rather low numeric accuracy \cite{li_contemporary_2024}, a fact, that is difficult to account for in theoretic investigations. We here make some effort to motivate the choice of network structure by theoretical arguments.

The paper is structured as follows. In the second section the approach is described in terms of statistical models, general DNN structure, and some theoretical properties. The third section describes the network structure in detail, offering a family of models that can be tailored to the problem at hand. The next section describes simulations and is followed by a section describing results. In the final section, results are discussed and an outlook is given on future research.

\section{Methods}

For a parametric model, let $P_θ,\, θ \in \Theta \subset \mathbb{R}^k $ be the family of distributions.
Denote with $X: Ω → \mathcal{X}^N$, $X = (X_1, ..., X_N)$ a random variable, where $X_i = (X_{i1}, ..., X_{iK}) \sim P_{θ}$ is $K$-dimensional data sampled from this model.
Statistically, it is of interest to solve the inverse problem, {\it i.e.} find the mapping $\hat{θ}:\mathcal X^N → Θ$ that maps data to an estimate of the parameter.
For statistical inference, it is required to derive the distribution of $\hat{θ}(X)$. For frequentist problems, the distribution of interest is the confidence distribution \cite{xie_confidence_2013}, {\it i.e.} the formal distribution on $\Theta$ from which all-level confidence sets for $\theta$ can be derived. Formally, the confidence distribution is implicitly defined by a mapping $\mathrm{C}_{conf}: \mathcal X^N \to \Theta$ so that $P(C_{conf}(X, α) \ni \theta) ≥ α \, \forall α \in (0, 1) $, where $\mathcal X^N$ is the event space induced by $X$.\par

For a Bayesian model, the target distribution is the posterior $P(\theta | X)$ with prior distribution $\theta_0 \sim P_{pr}$, where data $X$ is given by $X_i \sim P_{θ_0}$ as above. The posterior can also be implicitly defined by $P(\theta_0 \in C_{post}(X, α, P_{pr})) ≥ α \, \forall α \in (0, 1) $.\par

{\it ForwardFlow} aims to approximate these distributions based on on estimator $\hat \theta$ which is learned by a deep neural network.

\subsection{Frequentist inference}

Denote with $\hat{θ}$ the estimator $\mathcal X^N → \Theta$, $\hat \theta := \hat \theta(X)$. We also assume that parameters $θ$ are drawn from a training distribution governed by a dispersion parameter $σ > 0$, so that $θ \sim P_{tr, σ}$. Without loss of generality, we assume $σ$ to be one-dimensional.
We can then define
\begin{eqnarray*}
    \hat{θ}_{σ} & := & \argmin_g E_{σ}(E_{θ}( (g(X) - θ)^2 )), \\
    \hat{θ} & := & \lim_{σ → \infty} \hat{θ}_{σ}
\end{eqnarray*}


where $g \in \mathcal{G}$ is a reasonably large class of functions. These definitions reflect the fact that $\hat{θ}$ is approximated using a neural network and training data has to be generated. The final estimator is the result for a ``large'' dispersion parameter $σ$ which, in practice, is chosen empirically. The class $\mathcal{G}$ is defined by the network topology and network capacity \cite{goodfellow_deep_2016}.

\begin{remark}
    Under standard regularity conditions for the maximum likelihood estimator to exist, $\hat{θ}$ exists.
\end{remark}

$\hat{θ}$ can be interpreted as the ML estimator for $θ$ as it estimates the posterior mode for the prior distribution $P_{tr,σ}$ under a Bayesian interpretation. Then, $\hat{θ}$ converges to the ML estimator as $P_{tr,σ}$ becomes uninformative as $σ → ∞$.

\begin{remark}
    If an efficient estimator exists, $\hat{θ}$ is efficient for $θ$
\end{remark}

As $\hat{θ}$ directly targets the $\mathrm{MSE}(\hat{\theta}) = \mathrm{Var}_{\theta}(\hat\theta)+ \mathrm{Bias}_{\theta}(\hat\theta,\theta)^2$, $\hat{θ}$ is bias free, if possible, as otherwise a bias-free $\hat{θ}'$ with corresponding estimate $θ'$ could be found with $\mathrm{MSE}(\hat{\theta'}) < \mathrm{MSE}(\hat{\theta})$

\subsubsection{Robust inference}

The estimator $\hat{θ}$ can be designed to be unbiased in the presence of data contamination. Informally, contaminations considered here are mappings
    $f_c: \mathcal{X} \times U → \mathcal{X}$ 
which transforms the data in an identifiable way. Here,
    $U = (U_1, ..., U_m), U_i \,iid \sim U(0, 1)$
introduces randomness into the transformation. An example would be a function replacing a value by a value drawn from an outlier distribution with a certain probability.

We assume that $\hat{θ}$ is unbiased, {\it i.e.} $E(\hat{θ}(\mathcal{X})) = θ$. Contamination is then formalized as follows.

\begin{definition}
    Under the assumptions of the previous paragraph, let $f_{cn}$ be a sequence of measurable functions
        $f_{cn}: \mathcal{X}_n \times U_n → \mathcal{X}_n$, $U_n \sim U(0, 1)^{N \times m}$
    multivariate iid uniform. $(f_{cn})_n$ is called a contamination sequence.
    The induced bias function $b_n$, by $(f_{cn})_n$, is defined as
    $$
        b_n(θ) := E_{θ,U}(\hat{θ} (f_{cn}(X_n, U_n)) - \hat{θ}(\mathcal{X_n}))
    $$
    The bias function is called consistent, if and only if $b_n$ does not depend on sample size, {\it i.e.} $b_n(θ) = b(θ)$. $θ_c = E_{θ,U}(\hat{θ} (f_{cn}(X_n, U_n)))$ is called the biased parameter.
\end{definition}

Sequences $f_{cn}$ can be constructed as product functions from a base contaminator $f_c$ acting on a single sample. We now characterize types of bias that can be handled by a neural network approach. 

\begin{definition}
    Let $(f_{cn})_n$ be a contamination sequence and $\hat{θ}$ be a consistent. $(f_{cn})_n$ is called a bijective contamination, if and only if $ θ_c → (θ, b_n(θ))$ is bijective, for the induced bias function $b_n(θ)$.
\end{definition}

Important examples of data contamination that are covered by the following lemmata are missing data and outliers. Data contamination is here defined by a mapping of the data that affects only part of the data. Here, we assume that the corresponding mapping is the product of one unit at a time mappings, {\it i.e.} contamination does not depend on multiple samples at a time. The missing at random (MAR) assumption would be one example falling into this class of contamination. In this case $f_{cn}$ could be represented by a logistic regression model determining the probability of contamination which is conditionally applied according to a uniform component of $U_n$.

\begin{remark}\label{rem:contamination}
    Let $f_n$ be a bijective contamination., $X_n' = f(X_n)$, $\hat \theta(X_n)$ consistent. Then there exists as de-biasing function $g_n: \mathcal{X}^n → Θ$ as follows:
    \begin{align*}
        \exists g_n: E\left(\hat \theta(X_n') + g_n(X_n') \right) = θ.
    \end{align*}
\end{remark}

$g_n$ is simply defined as the negative bias, {\it i.e.} $E(θ - \hat \theta(X_n'))$.

\begin{lemma}\label{lemma:contamination}
    Let $(f_{cn})_n$ be a contamination function. Assume that $f_{cn}$ acts only on a known part of the data, {\it i.e.} for $X'_n = f_n(X_n)$, $X'_{ni} = X_{ni}, 1 \leq i \leq k$. Then $(f_{cn})_n$ is bijective.
\end{lemma}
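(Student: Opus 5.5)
The idea is that the uncontaminated coordinates carry enough information to recover $θ$ by themselves. Then both components of $θ_c \mapsto (θ, b_n(θ))$ can be read off from the contaminated data. To make this precise, first fix the setting of the lemma. Write $X'_n = f_{cn}(X_n, U_n)$ and split each observation into the untouched block $X'_{ni} = X_{ni}$, $1 \le i \le k$, and the remaining block, which may be altered. Since $X_{ni} \sim P_θ$ marginally in the untouched block and $U_n$ is independent of $X_n$, the distribution of the untouched block does not depend on the contamination at all. Let $\hat θ^{(k)}$ be a consistent estimator of $θ$ built only from these coordinates. I take this as part of the assumption "acts only on a known part of the data": the known, uncontaminated part is assumed to identify $θ$, as in the MAR example, where the always-observed covariates and the missingness indicators are untouched.

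The argument then has three steps.

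\textbf{Step 1: recover $θ$.} Show that $θ$ is a measurable function of the law of $X'_n$. Consistency of $\hat θ^{(k)}$, applied to $X'_n$, gives $\hat θ^{(k)}(X'_n) \to θ$. So $θ$ is identified from the contaminated data.

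\textbf{Step 2: recover the bias.} By definition, $b_n(θ) = θ_c - E_θ(\hat θ(X_n))$. Because $\hat θ$ is unbiased, this equals $θ_c - θ$. So once $θ$ is known from Step 1, the bias $b_n(θ)$ is obtained from the pair $(θ_c, θ)$ by a subtraction.

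\textbf{Step 3: bijectivity.} The map $θ_c \mapsto (θ, b_n(θ))$ is then well defined: $θ_c$ together with the law of the untouched block determines $θ$, and hence $b_n$. Its inverse is explicit, $(θ, b) \mapsto θ + b$. Injectivity follows because $θ + b_n(θ) = θ_c$ recovers $θ_c$ exactly. Surjectivity onto the image $\{(θ, b_n(θ)) : θ \in Θ\}$ is immediate.

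The main obstacle is Step 1. The definition in the paper writes the map as a function of $θ_c$ alone. But two different values of $θ$ could, in principle, produce the same biased parameter $θ_c$. To rule this out, I would argue that the known, untouched block provides a second consistent statistic whose expectation is $θ$ itself, not $θ_c$. The pair $(θ_c, θ)$ is therefore jointly identified from the law of $X'_n$, and the map should be read as acting on that law. The missing ingredient, that the first $k$ coordinates identify $θ$, I would state explicitly as the interpretation of "known part". The remaining consequence then follows from Remark~\ref{rem:contamination}: the de-biasing function is $g_n(X'_n) = \hat θ^{(k)}(X'_n) - \hat θ(X'_n)$, asymptotically.
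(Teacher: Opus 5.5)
Your skeleton (estimate $\theta$ from the untouched part, get $b_n(\theta)=\theta_c-\theta$ by subtraction, invert via $(\theta,b)\mapsto\theta+b$) is the paper's. The gap is that Step~1, which you yourself flag as the crux, is assumed rather than proved.

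This comes from reading $X'_{ni}=X_{ni}$, $1\le i\le k$, as a block of \emph{coordinates} inside every observation; the notation $X_{ij}$ of Section~2 invites this. Under that reading the lemma's hypothesis does not give identification. For example, a contamination that overwrites the outcome column with pure noise leaves only the covariates untouched. Their law does not involve $\theta$, and $\theta_c$ becomes constant in $\theta$. So your extra hypothesis ``the known part identifies $\theta$'' carries the whole content, and what you prove is a different lemma.

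There is also a mismatch in mode. $b_n$ and bijectivity are fixed-$n$ notions, and Remark~\ref{rem:contamination} asks for $E(\hat\theta(X_n')+g_n(X_n'))=\theta$ exactly. Your Step~1 rests on a limit, and your $g_n$ works only ``asymptotically'', because $\hat\theta^{(k)}$ is merely consistent.

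The missing idea is that in the paper $i$ indexes \emph{observations}. The proof sets $X_u=(X_{ni})_{i=1}^k$ (the untouched samples) and $X_c=(X_{ni})_{i=k+1}^n$ (the contaminated ones). Then $X_u$ is itself an i.i.d.\ sample of size $k$ from $P_\theta$. The standing assumption that $\hat\theta$ is unbiased therefore gives $E(\hat\theta(X_u))=\theta$ directly, with no identifiability assumption, no limit, and at fixed $n$. Comparing with the contaminated estimate yields the bias.

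In your notation, $g_n(X_n')=\hat\theta(X_u)-\hat\theta(X_n')$ then satisfies the remark exactly. Both $\theta=E(\hat\theta(X_u))$ and $b_n(\theta)=E(\hat\theta(X_n'))-E(\hat\theta(X_u))$ are functionals of the law of $X_n'$. With this reading your Steps~2 and~3 go through unchanged. Your sign $b_n(\theta)=\theta_c-\theta$ is the one consistent with the definition. Your point that the map must be read on the jointly identified pair $(\theta_c,\theta)$, not on $\theta_c$ alone, is what the paper's ``disentangle $\theta$ and $b(\theta)$'' does implicitly.
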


The proof is given in the appendix. While the existence of a de-biasing function is guaranteed by the previous remark, the lemma allows to easily check existence of the debiasing function in some situations. For example, missing data is covered by this case but not outliers.

The motivation for characterizing situations with bias is that the de-biasing function can be approximated by a neural network and as long as it exists, de-biasing is automatic when an appropriately trained model is used.

\subsubsection{Confidence distribution}

If a consistent parameter estimator $\hat{θ}$ exists, confidence intervals can be derived the confidence distribution which can be approximated by a bootstrap procedure. Bootstrap samples can be quickly generated and analyzed as a single batch by neural networks.

\subsection{Bayesian models}

The estimator $\hat{θ}$ allows to recover the confidence distribution but does not give access to the posterior distribution of a Bayesian model. We suggest to use Approximate Bayes Computation (ABC) which uses a filtering step on draws from the prior to approximate the posterior empirically.
For a sufficient statistic $T$, $P(\theta | X) = P(\theta | T(x))$. For a sample of statistics $T_i = T(X_i), X_i \sim P_{θ_i}, θ_i \sim P_{pr}, i = 1, ..., n$,

$$
    \hat{Θ} := \{ θ_i | d(T_i, T_{data}) < ε \}
$$
approximates the posterior, where $d$ is a metric on the space of summary statistics. By definition, $\hat{θ}$ is a minimal, sufficient statistic which is used for the ABC algorithm. The choice of $ε$ depends on the underlying model. Typically, $ε$ is chosen based on a quantile, such as the 1\% quantile of all distances which makes the algorithm inefficient.

\subsubsection{Importance Sampling}

To improve efficiency of ABC sampling, importance sampling can be used, {\it i.e.} samples can be drawn from a distribution preferentially targeting the posterior distribution. The importance sampling scheme suggested here assumes that an initial ABC sample has been drawn from the posterior  $\Theta_* = \{ \theta_i | \theta_i \,\mathrm{accepted} \}, N =|Θ_0|$. For ensuing draws, a new prior distribution is defined a mixture of normals centered at the accepted draws, {\it i.e.} $Z = \sum_i^N \frac{1}{N} Z_i$ with $Z_i \mathrm{iid} \sim N(\theta_i, \Sigma)$, 
    $\Sigma = \frac{s}{N - 1} Θ_0 Θ^T_0$, where $Θ_0$ is interpreted as a matrix of row vectors $θ_i$ and $s$ is a scale factor $s > 0$.

Using the modified prior, samples can be drawn according to $θ'_1 \sim Z$, $Θ_1 := \{ θ'_i | d(T_i, T_{data}) < ε \}$. This sample has a higher acceptance probability than the original sampling scheme, according to the following lemma:

\begin{lemma}\label{lemma:importance}
    Under posterior concentration, {\it i.e.} $E_{\mathcal X_n}(P_{\pi(\mathcal X_n)}( d(\theta, \theta_0) < \epsilon_n)) \to 1\,$ and known covariance matrix $\Sigma$
    $$
          P(d(\hat \theta^*, \hat \theta) < ε)
        ≥ P(d(\hat \theta^{'}, \hat \theta) < ε),
    $$
    for $n$ large enough, $d(\cdot, \cdot)$ metric on the parameter space, ${\mathcal X_n}$ the sampling space for sample size $n$.
\end{lemma}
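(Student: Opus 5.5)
The plan is to compare the two acceptance probabilities through the prior mass they assign to a small neighbourhood of the true parameter. Here $\hat\theta^*$ denotes the estimate computed from data simulated under a parameter drawn from the importance mixture $Z$, and $\hat\theta'$ the estimate from data simulated under a draw from the original prior $P_{pr}$. Write a generic acceptance probability as
\begin{align*}
P(d(\hat\theta^{\circ},\hat\theta)<\varepsilon)=\int P_{\theta}\big(d(\hat\theta(X_n),\hat\theta)<\varepsilon\big)\,q(\theta)\,d\theta ,
\end{align*}
where $q$ is either the prior density $p_{pr}$ or the mixture density $z$.

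First I would localise this integrand. By the consistency of $\hat\theta$ (the standing assumption of the frequentist section), the inner probability $a_n(\theta) := P_\theta(d(\hat\theta(X_n),\hat\theta)<\varepsilon)$ concentrates, as $n\to\infty$, on a shrinking ball $B_n=\{\theta: d(\theta,\theta_0)<\delta_n\}$ around the true $\theta_0$. The same holds for the observed $\hat\theta$. Consequently both acceptance probabilities are asymptotically $c_n\,q(\theta_0)\,\mathrm{vol}(B_n)$, with a common factor $c_n$ that does not depend on $q$. The comparison therefore reduces to showing $z(\theta_0)\ge p_{pr}(\theta_0)$, or more robustly $\int_{B_n} z \ge \int_{B_n} p_{pr}$, for $n$ large.

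Second, I would use posterior concentration. The accepted draws $\Theta_*$ are (approximately) a sample from the ABC posterior $\pi(\mathcal X_n)$, and by hypothesis this posterior places mass tending to one on $\{d(\theta,\theta_0)<\epsilon_n\}$. Hence, with probability tending to one, all centres $\theta_i$ of the mixture lie within $\epsilon_n$ of $\theta_0$. With $\Sigma$ known and fixed, each normal component then has density at $\theta_0$ bounded below by roughly $\phi_\Sigma(0)\,e^{-O(\epsilon_n^2)}$. So $z(\theta_0)\to \phi_\Sigma(0)=(2\pi)^{-k/2}|\Sigma|^{-1/2}$.

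The remaining comparison, $\phi_\Sigma(0)\ge p_{pr}(\theta_0)$, is the main obstacle. It is not automatic: a very wide $\Sigma$, or a prior sharply peaked at $\theta_0$, would break it. I would handle this by choosing the scale factor $s$ in $\Sigma$ so that the proposal is at least as concentrated as the prior near $\theta_0$. This is the role of the hypothesis ``known covariance matrix $\Sigma$'', which I read as allowing $\Sigma$ to be taken so that $|\Sigma|^{1/2}\le (2\pi)^{-k/2}/p_{pr}(\theta_0)$. Alternatively, one can let $\Sigma$ scale with the posterior spread. In that case $|\Sigma|\to 0$, so $z(\theta_0)\to\infty$ while $p_{pr}(\theta_0)$ stays finite, and the inequality holds for $n$ large enough with no further condition. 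I would adopt this second route and state the first as a remark.

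Finally I would pass from the pointwise density inequality to the integrals over $B_n$ using continuity of $p_{pr}$ and of $z$ near $\theta_0$, and take expectations over $\mathcal X_n$ to absorb the event of probability tending to one on which the centres concentrate. Since the inequality is strict asymptotically, the $o(1)$ error terms do not affect it for $n$ large enough, which gives the claim.
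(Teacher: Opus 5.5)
\textbf{Assessment: there is a genuine gap.} Your route differs from the paper's, and its central localisation step fails as written. It can be repaired.

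\textbf{Notation and the paper's route.} Your labels are the reverse of the paper's. There $\theta'$ denotes a draw from the mixture $Z$ (``$\theta'_1\sim Z$''). The paper's proof targets $P(d(\hat\theta^*,\hat\theta)<\varepsilon)\le P(d(\hat\theta',\hat\theta)<\varepsilon)$, so the displayed inequality in the statement is reversed. With your relabelling you are proving the intended claim: the $Z$-proposal is accepted at least as often.

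The paper's argument is essentially your ``first route''. It treats the scale factor $s$ as free and asserts that $E(d(\hat\theta',\hat\theta))=h(s)$ decreases to $0$ as $s\to0$. Hence the new acceptance probability tends to $1$, and some $s$ gives $p'\ge p$. Neither $n\to\infty$ nor posterior concentration is used. Your adopted second route (a data-driven $\Sigma$ shrinking with the posterior, with the inequality holding for $n$ large) is genuinely different.

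\textbf{Where the localisation fails.}
\emph{(i) Fixed tolerance.} For a fixed tolerance $\varepsilon$, consistency gives $a_n(\theta)\to\mathbf{1}\{d(\theta,\theta_0)<\varepsilon\}$, not concentration on a shrinking ball. The comparison is then between $Z(B(\theta_0,\varepsilon))$ and $P_{pr}(B(\theta_0,\varepsilon))$, not between densities at $\theta_0$. A shrinking $B_n$ needs a tolerance $\varepsilon=\varepsilon_n\to0$, which you would have to assume.

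\emph{(ii) Non-constant $z_n$.} Even then, writing both acceptance probabilities as $c_n\,q(\theta_0)\,\mathrm{vol}(B_n)$ with $c_n$ independent of $q$ requires $q$ to be essentially constant on $B_n$. That holds for the fixed, continuous $p_{pr}$. It fails for $z_n$ in exactly the route you adopt. When $\Sigma$ is $s$ times the covariance of accepted draws, whose spread is at most of the order of the acceptance neighbourhood, $z_n$ varies on the scale of $B_n$ or finer. So ``continuity of $z$ near $\theta_0$'' gives no uniform control, and the pointwise fact $z_n(\theta_0)\to\infty$ does not yield $\int a_nz_n\ge\int a_np_{pr}$. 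Moreover, once $\Sigma\to0$ your factor $e^{-O(\epsilon_n^2)}$ becomes $\exp\{-O(\epsilon_n^2/\lambda_{\min}(\Sigma))\}$, which concentration alone does not bound.

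\textbf{Mass-comparison repair.} Compare masses instead of densities. Show that $Z$ puts mass bounded away from zero on the region where $a_n$ is within a constant factor of its maximum. Meanwhile $\int a_np_{pr}\le\sup p_{pr}\int a_n$ is of order $\max a_n$ times the (vanishing) volume of that region. For fixed $\varepsilon$ this reads $Z(B(\theta_0,\varepsilon))\to1>P_{pr}(B(\theta_0,\varepsilon))$.

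\textbf{Shorter repair via Cauchy--Schwarz.} A shorter argument works for every fixed $n$ and sits closer to the paper's $s\to0$ route. The accepted centres have density $a_np_{pr}/\int a_np_{pr}$. For $a_n$ continuous, the expected acceptance probability under $Z$ therefore tends, as $s\to0$, to $\int a_n^2p_{pr}/\int a_np_{pr}$. By Cauchy--Schwarz this is $\ge\int a_np_{pr}$, strictly unless $a_n$ is $p_{pr}$-a.s.\ constant.

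This also shows that the paper's conclusion (some $s$ with $p'\ge p$) holds without its claim $p'\to1$, which is not justified at fixed $n$. As $s\to0$ the mixture collapses onto the accepted draws, not onto $\hat\theta$. So $p'$ tends to their average re-acceptance probability, which is below $1$.
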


Due to the modification, accepted samples $Θ_1$ can be related to the posterior distribution by the ratio $w_{iε}$, defined as follws:

$$
    P(d(\hat \theta^{'}, \hat \theta) < ε) =
        \frac{ P(d(\hat \theta^{'}, \hat \theta) < ε) }{P(d(\hat \theta^{'}, \hat \theta') < ε)} P(d(\hat \theta^{'}, \hat \theta') < ε)
        = w_{iε} P(d(\hat \theta^{'}, \hat \theta') < ε).
$$

$w_i$ can be defined as the ratio of the corresponding densities. In downstream analyses, such as density or quantile estimation, sample $Θ_1$ has therefore to be inversely weighted by $w_i$.

The scaling factor controls the concentration of samples in $Θ_1$ around the initial posterior sample and thereby controls the acceptance rate. Small $s$ lead to high acceptance rates whereas large values of $s$ lower acceptance rates. The exploration of the posterior space follows an inverse relationship with respect to $s$ such that the optimal choice of $s$ is subject to empirical investigation.

\subsection{Training distribution}

The deep network approximates the parameter estimator $\hat{θ}: \mathcal{X}^n → Θ$ which is a deterministic function. We assume that the network first finds a sufficient statistic ($\hat{θ}_S$ which is then mapped into the parameter space ($\hat{θ}_M$, such that
$\hat{θ} = \hat{θ}_M \circ \hat{θ}_S$.

Then, conditionally on $\hat{θ}_S(\mathcal X) = s$, $\hat{θ}$ finds the estimator by point-wise minimization of
$$
    \hat{θ}
    = \argmin_{θ|\hat{θ}_S} \{ (θ_i - θ)^2 | (\hat{θ}_i, θ_i), \hat{θ}_i = \hat{θ} \}
    = \bar{θ} = \frac{1}{N} \sum_i θ_i,
$$
{\it i.e.} among the training data sets with the same sufficient statistic $s$, the estimator is chosen as the sum-of-squares minimizer. Note that this decomposition implies that $\hat{θ}$ depends on the distribution of $θ$ as used in the training process. If the training distribution is interpreted as a prior, $\hat{θ}$ can be interpreted as the expected value of the posterior distribution. In order to allow for frequentist inference, the training distribution has to be uninformative, similar to the situation in objective Bayes analyses \cite{jeffreys_theory_1998}. As parameter values have to be drawn during training, a flat prior cannot be chosen, in general. In practice, a trade-off between training complexity and faithful frequentist inference has to be determined empirically.

%
%

\subsection{Network structure}

The parameter estimator $\hat{θ}: \mathcal{X}^n → Θ$ that is to be approximated by the network is a deterministic function. In principle, there is no uncertainty about the network weights up to non-identifiability so that after training, the estimator is a parametrized function of the network weights $β$. Due to over-parametrization, the network relies on random weight initialization to break symmetry and implicit regularization to find a local optimum using stochastic optimization \cite{kingma_adam_2017}. The goal is to design a generic network structure that can approximate a wide class of estimators.\par

Figure \ref{fig:network} shows an example of a branched network structure that is proposed in this paper. The branching is motivated by Roa-Blackwellization and finite sample size exactness, both being discussed below. For simplicity of exposition, we assume tabular i.i.d. data, {\it i.e.} input of the form $X \in \mathbb{R}^{N \times M}$. The networks accepts batches of data, {\it i.e.} a tensor $B \in \mathbb{R}^{B \times N \times M}$. For a neural network, implicit parallelization over the first index takes place. Independence of observations that also the second index should be handled in parallel which implies that coordinate-wise dense layers are used, {\it i.e.} the dense layer maps over the second dimension of the input tensor which are the individual samples for tabular data (see below). The general structure is that the input is fed into several branches with varying depth of coordinate-wise dense (or compatible) layers which end in custom so-called collapsing layers. These layers map tensors slices to a single number thereby reducing dimensionality of the tensor. Collapsing layers include the computation of mean values, covariances, or standard deviations. After collapsing several layers of dense mappings provide for further transformations. All remaining branches are concatenated and dense layers map the remaining single branch to the required vector size of the parameter vector which is the final output.

The coordinate-wise dense mapping is defined as follows for input
    $X \in \mathbb{R}^{N \times K}$
and output
$X' \in \mathbb{R}^{N \times L}$.
[lit]
$$
    X'_{lm} = \varphi \left( \sum_{i} B_{lmij} X_{ijk} + b_{l} \right)
$$

\begin{figure}\centering
    \includegraphics[width=.5\hsize]{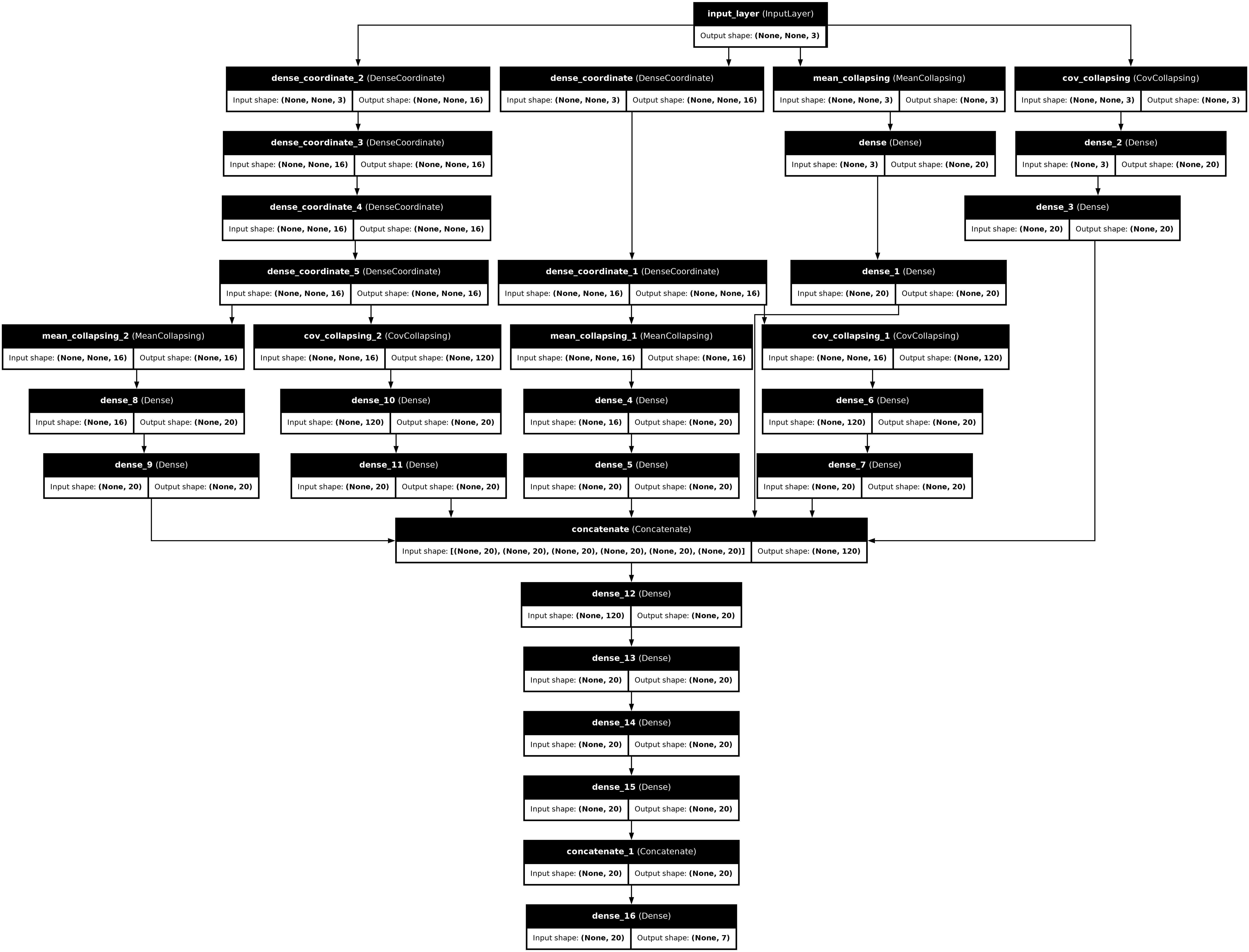}
    \caption{Network structure of a FowardFlow network.}
    \label{fig:network}
\end{figure}

\subsubsection{Rao-Blackwellization}

Rao-Blackwell's theorem (RBT) states that for sufficient statistic $T$ and estimator $δ$ of $θ$,
\begin{theorem}{Blackwell (1947)}
With the notation of the paragraph above, let $T$ be a sufficient statistic for $θ$ and $\hat{θ}$ an estimator of $θ$. Then,
$$
    E((\hat{θ}^*(X) - θ)^2) =  E (\ (E(\hat{θ}(X) | T(X)) - θ)^2\ ) ≤ E( \hat{θ}(X) - θ)^2 ),
$$
where $\hat{θ}^*(X) := E(\hat{θ}(X) | T(X))$
{\it i.e.} for any estimator $θ$, the MSE of the conditional version $ θ | T $ can only have the same or decreased MSE.
\end{theorem}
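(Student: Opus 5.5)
The plan is the standard Rao--Blackwell argument: decompose the risk of $\hat{θ}$ around its conditional expectation given $T$ and show that the cross term vanishes. First, sufficiency of $T$ is what makes $\hat{θ}^*$ an estimator at all. By the definition of sufficiency, the conditional distribution of $X$ given $T(X)$ does not depend on $θ$. Hence $\hat{θ}^*(X) = E(\hat{θ}(X) \mid T(X))$ is a measurable function of $T(X)$ alone and does not involve the unknown $θ$. Throughout I fix $θ$ and take all expectations under $P_θ$. I assume $E(\hat{θ}(X)^2) < \infty$; otherwise the right-hand side is infinite and there is nothing to prove. For vector-valued $θ \in \mathbb{R}^k$ the argument is carried out coordinatewise, or equivalently with the squared Euclidean norm.

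Next I write
\[
\hat{θ}(X) - θ = \bigl(\hat{θ}(X) - \hat{θ}^*(X)\bigr) + \bigl(\hat{θ}^*(X) - θ\bigr)
\]
and expand the square. This gives three terms:
\[
E\bigl((\hat{θ} - θ)^2\bigr) = E\bigl((\hat{θ} - \hat{θ}^*)^2\bigr) + 2E\bigl((\hat{θ} - \hat{θ}^*)(\hat{θ}^* - θ)\bigr) + E\bigl((\hat{θ}^* - θ)^2\bigr).
\]

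The key step is to show that the cross term vanishes, using the tower property. Conditioning on $T(X)$, the factor $\hat{θ}^* - θ$ is $T$-measurable, so it can be pulled out of the inner expectation. What remains is $E(\hat{θ} - \hat{θ}^* \mid T) = \hat{θ}^* - \hat{θ}^* = 0$. Taking outer expectations then kills the cross term. Integrability of the product follows from Cauchy--Schwarz, given the second-moment assumption and the fact that conditional Jensen yields $E((\hat{θ}^*)^2) \le E(\hat{θ}^2)$.

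Dropping the nonnegative term $E((\hat{θ} - \hat{θ}^*)^2)$ gives the stated inequality. The first equality in the statement is just the definition of $\hat{θ}^*$. As a by-product, equality holds if and only if $\hat{θ} = \hat{θ}^*$ almost surely, that is, if and only if $\hat{θ}$ is already a function of $T$. This is the motivation for the collapsing layers that appears in the surrounding text.

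I expect the main obstacle to be conceptual rather than computational: checking that $\hat{θ}^*$ is free of $θ$. Without that check the result is trivial but useless. This is exactly where sufficiency enters, through the factorization (or definition) of $T$. The rest is the routine $L^2$ projection argument.
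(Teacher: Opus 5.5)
Your proof is correct and complete. The paper gives no proof of this statement. It cites Blackwell (1947) and offers only the heuristic that, for a fixed value $t = T(X)$, one may as well average all estimates sharing that $t$, recast as $f(t) := E(\delta(X) \mid T(X) = t)$.

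Your argument is the rigorous form of that heuristic. Splitting $\hat{\theta} - \theta = (\hat{\theta} - \hat{\theta}^*) + (\hat{\theta}^* - \theta)$ and killing the cross term with the tower property says exactly that $\hat{\theta}^*$ is the $L^2$ projection of $\hat{\theta}$ onto the $\sigma(T)$-measurable functions. It also gives you the exact gap $E((\hat{\theta} - \hat{\theta}^*)^2)$ and hence the equality case, which the paper never states.

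The other textbook route is conditional Jensen: $E((\hat{\theta} - \theta)^2 \mid T) \geq (\hat{\theta}^* - \theta)^2$ almost surely, followed by taking expectations. It needs only $\hat{\theta} \in L^1$, which is required anyway for $\hat{\theta}^*$ to exist; your ``nothing to prove'' reduction tacitly assumes this. It also extends verbatim to any convex loss, with the equality case under strict convexity.

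Your check that $\hat{\theta}^*$ is free of $\theta$ is what links the theorem to the paper's use of it. Since $\hat{\theta}^*$ is one fixed function of $X$ for every $\theta$, the pointwise inequality survives integration over the training distribution $P_{tr,\sigma}$. So Rao--Blackwellizing never increases the training loss $E_\sigma(E_\theta((g(X) - \theta)^2))$, which is the real motivation for the collapsing layers.
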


Informally, for a given value $t = T(X)$, $\hat{θ}$ estimates the same $θ$, so we can as well take the average of all estimates for this $t$. Therefore, RBT can be equivalently defined using a function $f: \mathrm{dom}\,T → Θ$ with $f(t) := E(δ(X) | T(X) = t)$: $E( (f(T ) - θ)^2 ) ≤ E( δ(X) - θ)^2 )$. $f$ can be well approximated by a deep neural network. For example, two full connected layers can define an approximate indicator function on a close interval of $T$ and the bias component of the activation function can represent the expectation. The collapsing layers together with ensuing layers allow to implicitly make use of RBT. The branched structure allows the network to find different types of sufficient statistics for different components of the parameter vector (see appendix).

\subsubsection{Finite Sample Properties}

For data analysis, finite sample properties are of interest. It is well known that consistent estimators can be bias for smaller sample sizes. It is possible to develop bias corrections in these situations. In the current situation, the DNN can automatically learn these corrections when data sets of different sizes are offered during training. This goal also supports branched networks when different branches can potentially specialize on different sample sizes. The sample size dependence is then implicitly learned by the network. In experiments, it was not beneficial to offer the sample size explicitly in a repeated data column (data not shown).

\subsubsection{Hyperparameters}

\begin{table}[h]
\begin{center}
\begin{tabular}{lp{6cm}p{4cm}}
 Name & Description & Range \\
\hline
    $Nbranches$     & Number of branches & 1-3 \\  
    $NdenseBranch$  & Number of dense layers per branch & 0-8\\
    Collapsing      & Collapsing layers applied after each branch (same for all branches) & projection, mean, standard devation (sd), covariance (cov)\\
    $NdensePostColl$ & Number of dense layers after collapsing & 0-4\\
    $NdensePostConcat$ & Number of dense layers after concatenation & 0-4\\
    $NfeaturesBranch$ & Number of features of dense layers in branches & 16-64\\
    $NfeaturesPost$ & Number of features of dense layers in branches & 16-20\\
    $NfeaturesPostConcat$ & Number of features of dense layers after collapsing & 16-20\\
    $Nproj$         & Number of projections performed by the projection collapsing layer & 3\\
    Loss function   & loss function of the network & Mean squared error (mse), Chi-square goodness-of-fit (gof)\\
\end{tabular}
\end{center}
\caption{Definitions of hyperparameters of ForwardFlow networks. Column range indicates values expolored in the simulations.}\label{tab:hyperpars}
\end{table}

Hyper-parameters of the network involve the network structure as well as parameters concerning individual layers. The parameters are defined in table \ref{tab:hyperpars}.

\section{Simulations}

Simulations were conducted for two types of statistical models. The first set of simulations concerns regression models, whereas the second illustrates implicit algorithm estimation for a problem involving a classic EM algorithm in standard modeling. $10^3$ replications were performed per scenario to assess standard errors. Bootstrap confidence intervals were computed using parametric bootstrap resampling with $5 \times 10^3$ replications. In all simulations, during training, sample size was varied uniformly on an interval of sample sizes. On account of the batch based updating of the stochastic gradient descent algorithm, sample size was fixed for each batch. Due to limitations in available compute time, hyper-parameters were not systematically tuned but selected scenarios were evaluated. The full list of evaluated scenarios is given as supplementary information.

\subsection{Aims}

The aim of the simulations is to investigate coverage probabilities of confidence intervals under several simulation scenarios.

\subsubsection{Regression models}

Simulations concerning regression models focus on data contamination. Missingness patterns are induced in the data based on missing at random model. The network needs to perform implicit data imputation for consistent estimation. For the regression model simulations, the impact of discrepancies in sample sizes used during training and testing is investigated.

\subsubsection{Genetic data}

Haplotype frequency estimation also concerns a missing data problem. In this case, consistent estimates cannot be recovered using complete case analysis. Instead, the network has to approximate an EM algorithms implicitly for consistent estimation.

\subsection{Data generation}

\subsubsection{Regression models}

Covariates were simulated with a fixed intercept (value 1) and three covariates which were drawn from a multivariate normal distribution (MVN) with mean 0 and exchangeable covariance matrix (covariance of 0.1). For models with missing data, logit of probability of missingness was simulated according to $x_{1, logit} = (1, x_2)^T \beta_{m1}, x_{2, logit} = (1, x_1, x_3, x_1 x_3)^T \beta_{m2}$ with $β_{m1} = (-1, .5), β_{m2} = (0, .5, .25., .5)$. Missingness indicators were sampled according to these probabilities. Missing values were set to 0 and missingness indicators were added as additional columns to the data.

For linear outcomes, iid standard errors were added to linear predictors to produce the final outcome. For binary outcomes, response values were independently drawn from Bernoulli variables according to probabilities given by the inverse logit of linear predictors.

Outcome columns were added as first column to the covariate matrix without special treatment in the network.

To train, parameters for each simulated data were drawn from an uncorrelated
MVN with mean zero and variance 2. Per batch, sample size was drawn uniformly from the interval $[30, 200]$. 1000 epochs with 100 batches each were run. 300 data sets were drawn per batch, resulting in $3\times 10^7$ data sets generated during training.

Models were evaluated using the mean squared error (MSE) and coverage probabilities of confidence intervals. $10^3$ epochs were used during training for the regression models.

\subsubsection{Genetic data}

A classical problem in human genetics is the estimation of haplotype frequencies (HTFs). Haplotypes are tuples of alleles $h = (h_1, ..., h_K)$ which are realizations of a genetic variant at genetic locations $1, ..., K$ and are taken to be Bernoulli variables for the purpose of this study. A diplotype is a pair of haplotypes $(h_1, h_2)$ representing both parental contributions. In many cases, the diplotype is unobserved. Instead, a genotype is observed which can be defined as the element-wise addition $g = h_1 + h_2$. The genotype can be interpreted as counting the number of one of the alleles per locus which does no longer contain full information on the underlying diplotypes. The haplotype distribution is assumed to be multinomial and, to ensure identifiability, the diplotype distribution is assumed to be the product distribution.

The problem is to estimate HTFs from genotype data. To this end haplotype frequencies were drawn from a Dirichlet distribution. Due to the labeling invariance of haplotypes, the Dirichlet distribution can be chosen to be symmetric, {\it i.e.} being parametrized by a single real number $α > 0$ so that for $\mathrm{Dir}(\boldsymbol{α})$, $\boldsymbol{α} = (α, ... , α)$. From this fixed distribution, HTFs are drawn from which $2 N$ haplotypes are drawn for a sample size of $N$. Haplotypes are converted to binary representation for which each digit corresponds to a locus. The representation allows locus wise counting of alleles producing genotypes. The resulting pair of HTFs and genotypes is used for training.

\subsection{Networks}

\subsubsection{Regression models}

\begin{table}[h]
\begin{center}
\begin{tabular}{ll}
 Parameter & Value \\
\hline
    $NdenseBranch$  & $(0, 2, 4)$\\
    Collapsing      & Projection\\
    $NdensePostColl$ & 0 \\
    $NdensePostConcat$ & 3 \\
    $NfeaturesBranch$ & 32 \\
    $NfeaturesPost$ & -\\
    $NfeaturesPostConcat$ & 32\\
    $Nproj$         & 3\\
    Loss function   & mse\\
\end{tabular}
\end{center}
\caption{Values of hyperparameters used for the simulation of regression models.}\label{tab:parRegr}
\end{table}

Hyperparameters of the network used in the simulation of regression models is given in table \ref{tab:parRegr}. No systematic hyperparameter tuning was performed. The network was gradually expanded until coverage probabilities were deemed sufficient.

\subsubsection{Genetic data}

\begin{table}[h]
\begin{center}
\begin{tabular}{ll}
 Parameter & Value \\
\hline
    $NdenseBranch$  & $(0, 2, 4)$\\
    Collapsing      & Mean, Sdev\\
    $NdensePostColl$ & $(0, 2, 4, 8)$ \\
    $NdensePostConcat$ & 0 \\
    $NfeaturesBranch$ & 16 \\
    $NfeaturesPost$ & 16\\
    Loss function   & mse\\
\end{tabular}
\end{center}
\caption{Values of hyperparameters used for the simulation of regression models.}\label{tab:parGenet}
\end{table}

Hyperparameters of the network used in the simulation of models for genetic data is given in table \ref{tab:parGenet}. Again, n o systematic hyperparameter tuning was performed. The network was gradually expanded until coverage probabilities were deemed sufficient. The main difference to the networks for the regression models is a secondary branching after collapsing.

\section{Results}

\subsection{Regression Models}

\begin{table}[h]\centering
    \begin{tabular}{p{4cm}rrrrrrrr}
Sample Size & Par &25&35&50&75&150&200&300\\
\hline
\multirow{4}{*}{Linear Regression A}
&0&0.976&0.969&0.945&0.951&0.941&0.945&0.942\\
&1&0.964&0.945&0.948&0.955&0.932&0.911&0.897\\
&2&0.982&0.968&0.963&0.952&0.946&0.949&0.942\\
&3&0.955&0.957&0.931&0.966&0.932&0.939&0.929\\
\hline
\multirow{4}{*}{Linear Regression B}
&0&0.96&0.958&0.952&0.928&0.931&0.91&0.878\\
&1&0.949&0.947&0.919&0.927&0.901&0.885&0.842\\
&2&0.977&0.968&0.947&0.936&0.929&0.913&0.905\\
&3&0.962&0.941&0.94&0.944&0.927&0.929&0.927\\
\hline
\multirow{4}{*}{Logistic Regression}
&0&0.862&0.872&0.819&0.795&0.72&0.69&0.632\\
&1&0.896&0.847&0.848&0.828&0.74&0.695&0.657\\
&2&0.888&0.856&0.829&0.812&0.734&0.655&0.623\\
&3&0.874&0.869&0.834&0.838&0.733&0.684&0.63\\
\hline
\end{tabular}

    \caption{Marginal coverage probabilities per parameter for regression models at the 95\%-level. {\em Par}: index of parameter. {\em Sample size}: sample size used in the simulation phase (top row). {\em Linear Regression A}: missing data model trained with 1000 epochs. {\em Linear Regression B}: missing data model trained with 100 epochs. {\em Logistic Regression}: binary output trained with 10 epochs.}\label{tab:resultRegression}
\end{table}

Table \ref{tab:resultRegression} shows marginal coverage probabilities for simulations of the linear model with missing data. Coverage probabilities are nominal for scenario {\em Linear Regression A}, except for the unseen sample size of 300. The unseen sample size 25 shows excess coverage. For scenarios {\em Linear Regression B}, {\em Logistic Regression} under-coverage is observed. In these scenarios, 100 and 10 epochs, respectively, have been used during training.

\subsection{Genetic Data}

\begin{table}[h]\centering
    \begin{tabular}{rrrrr}
&Par&Coverage&rMSE&Bias\\
\hline
&0&0.948&0.010 & 8.84e-05\\
&1&0.940&0.010 & -5.70e-04\\
&2&0.948&0.010 &6.49e-04\\
&3&0.946&0.010 & 6.71e-04\\
&4&0.938&0.010 & -1.04e-04\\
&5&0.944&0.010 &3.93e-04\\
&6&0.936&0.010 &-1.80e-04\\
\hline
\end{tabular}

    \caption{Marginal coverage probabilities per parameter for haplotype frequency estimation at the 95\%-level.}\label{tab:resultHT}
\end{table}

Results for a simulation of HTF estimation are shown in table \ref{tab:resultHT}. Slight under-coverage is observed (average coverage 0.942) for 1000 replications. Estimates are unbiased and the rMSE is 0.01 for all parameters.

\subsection{ABC}

\begin{figure}\centering
    \includegraphics[width=.8\hsize]{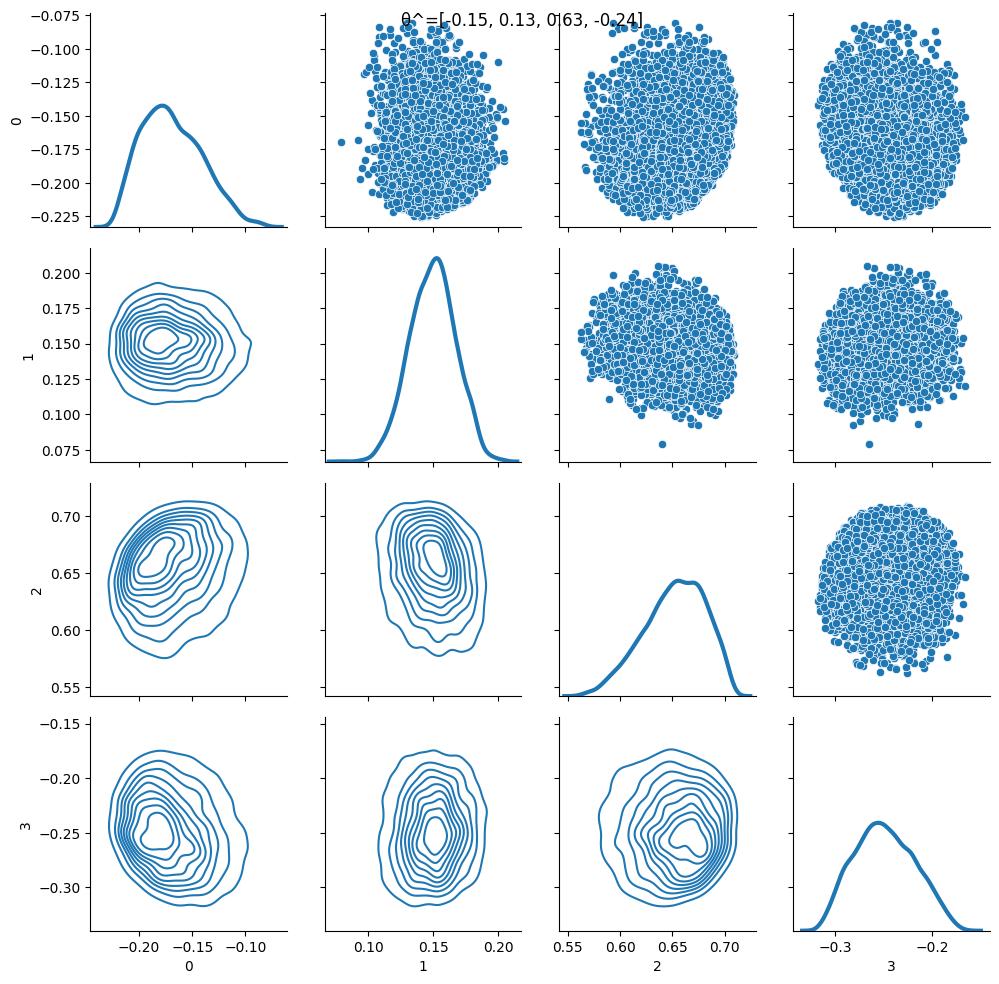}
    \caption{Marginal density estimates (diagonal), pairs-wise controur plots (lower-left triangular matrix) and pair-wise scatter plots (top-right triangular matrix) of the posterior distribution of an example data set derived using ABC. True parameters are shown at the top.}
    \label{fig:abcPost}
\end{figure}

To illustrate the ABC methodology, a single data set was simulated and analyzed. The acceptance rate of this simulation was 5\% and density plots of the estimated pairwise posterior distributions are shown in Figure \ref{fig:abcPost}.

\section{Discussion}

In this conceptual paper, a simulation only, feed-forward network, likelihood-free based approach is proposed for parametric model inference. Simulations show that the approach can deal with data contamination, can implicitly learn algorithms, and can be employed in Bayesian inference. Data simulation is often simpler to implement than implementing the data likelihood and is required for model validation in any case. The promise of having completed the full work of model implementation is attractive in many applications. Even if the likelihood is straightforward to implement it may costly to evaluate which complicates model implementation, a step that can be omitted in the {\it ForwardFlow} framework. Our simulation results suggest exact coverage of marginal confidence intervals implying UMVU properties of the trained network.

The ideas in this paper are borrowed from Bayesian models which use a more complicated network structure as compared to {\it ForwardFlow}. Apart from a {\it SummaryNet} which is comparable to the full {\it ForwardFlow} network, a normalizing flow is learned that conditionally on summaries from the {\it SummaryNet} learns the parameter distribution. In a second step, the flow can be used generatively to draw samples from the posterior distribution. In its current form, {\it ForwardFlow} has to resort to a (non-)parametric bootstrap that has to generate data sets in addition to employing the network. The trade-off is between using a simpler network and generating new data ({\it ForwardFLow}) and using only a more complex network ({\it BayesFlow}). For both approaches, it is possible to learn secondary models that learn, say, the multivariate cumulants of the target distribution (confidence, posterior). The models would have to be trained on accurate approximations of the target distributions, {\it i.e.} starting from a simulated parameter, many samples would have to be sampled from the confidence or posterior distribution the multivariate cumulants of which are then learn by the network.

A standard approach to finite sample exactness is based on cumulative probabilities and is often computationally intensive \cite{martin_plausibility_2015, bohringer_exact_2022}. Deep learning models can acquire this exactness automatically when trained with varying sample sizes. Our simulations confirm this behavior but, surprisingly, it seems important to train the model on all samples sizes to be later analyzed. Under-coverage was observed in some simulations when the analyzed sample size was larger than seen in training data. This behavior might be caused by the relatively low sample sizes considered here, but is an impaortant caveat when employing the networks.

In the simulations only marginal confidence intervals were analyzed. Multivariate confidence intervals can be constructed using concepts from data depth \cite{ignacio_data_2009}. In case a secondary model has been trained to approximate the confidence distribution, multivariate cumulants can be used to construct joint confidence sets \cite{mccullagh_cumulants_2009}.

With respect to the analysis of data contamination, only a fixed missingness mechanism was simulated. Results are therefore purely conceptual as in practice, an unknown missingness mechanism has to be allowed for. For training a useful network, random missingness inducing models would therefore have to be drawn and employed on a data set that was drawn in parallel. Such a missingness model has been implemented in the software but has not yet been extensively simulated. The network would be likely to benefit from switching from fully connected layers to attention based layers which can help to accommodate the symmetry of columns of tabular data \cite{vaswani_attention_2017}.

In the simulations, we demonstrate examples where, in the genetic situation, the development of an EM-algorithm is obviated by the simulation only approach. We give a compact implementation in the appendix for both data simulation and estimation. The difference is roughly a factor of 10 in lines of code, indicating that reduction in development time is an additional advantage of {\it ForwardFlow}.

The choice of network structure has to be motivated by conceptual considerations and makes idealized assumptions. Due to the limited numeric accuracy of standard implementations, it is unclear in how far these concepts are actually reflected in the network. This quantization of the numeric parameter representation is a deliberate design principle of deep neural networks, trading off accuracy against a larger number of parallelizable computations. Here, we surmise that accuracy can be regained by combining results from several computation paths. However, the final choice of network depends on hyper-parameter tuning and might result in uniintuitive networks.\par

In summary, we believe that the {\it ForwardFlow} approach can potentially offer significant advantages in practical data analysis by combining fast model implementation, robust inference and finite sample exactness. More work is needed to offer pre-trained models that can analyze a wide class of parametric models.


\newpage

\printbibliography

\newpage
\appendix
\section{Proofs}

\subsection{Data conatination (lemma \ref{lemma:contamination})}

\begin{proof}
The proof is based on the fact that a proper estimate can be obtained from the known, uncontaminated part of the data.

Without loss of generality, assume a fixed $n$ and $k$ to be the number of uncontaminated samples $1 ≤ i ≤ k$.
Denote with
    $X_{u} = (X_{ni})_{i = 1}^k$
the uncontaminated part of the data, and with
    $X_{c} = (X_{ni})_{i = k+1}^n$
the contaminated part. Then
    $E(\hat{θ}(X_u)) = θ$.
By assumption, $E(\hat{θ}(X_c)) = θ + b(θ) = θ_c$.
Using $ E(\hat{θ}(X_u) - \hat{θ}(X_c)) = θ - θ_c = b(θ)$ defines a bias function and the construction allows to disentangle $θ$ and $b(θ)$.
\end{proof}

\subsection{Importance sampling for ABC (lemma \ref{lemma:importance})}

\begin{proof}
Let $\hat \theta^D \in \mathbb{R}^k$ be the MLE of the data. Let $F$ follow a multivariate normal distribution such as $F \sim \mathcal{N}(\hat \theta^D, I_k \epsilon)$, where $I_k$ is the identity matrix and $\epsilon > 0$. 

To approximate the posterior distribution, parameters are drawn from the the mixture distribution
    $Z = \sum_i^N \frac{1}{N} Z_i$, where
    $Z_i \mathrm{iid} \sim N(\theta_i, \Sigma)$.
The covariance matrix is a scaled version of the empirical covariance matrix of a sample of already drawn posterior samples. We here assume that this covariance matrix is known.

It is to be shown that acceptance probability increases, {\it i.e.}
$$
          P(d(\hat \theta^*, \hat \theta) < ε)
        ≤ P(d(\hat \theta^{'}, \hat \theta) < ε),
$$
for parameters $θ'$ drawn from the new prior compared to samples $θ^*$ from the old prior. Note, that
    $E(d(\hat \theta^{'}, \hat \theta)) = h(s)$
is a monotonous function of scaling factor $s$, {\it i.e.} $E(d(\hat \theta^{'}, \hat \theta)) → 0$ for $h(s) → 0$ and consequently $p' = P(d(\hat \theta^{'}, \hat \theta) < ε) → 1$. This implies that for any
    $p = P(d(\hat \theta^*, \hat \theta) < ε)$,
an $s$ can be chosen, so that $p' ≥ p$.
\end{proof}

\section{Haplotype simulation \& EM algorithm}

\subsection{Simulation of genotypes}

The following R-code allows to simulate genotype data. Function \texttt{simulateGenotypes} takes arguments {\it N} (sample size), {\it Nloci} (number of loci to simulate), and {\it hapDIst} (vector of HT frequencies). The code assumes the function \texttt{int2bin} that converts an integer to its binary representation. In total, 10 lines of code are required. This simulation assumes bi-allelic loci.

\begin{lstlisting}[language=R]
simulateDiplotypes = function(N = 1, hapDist, names = 0:(length(hapDist) - 1))
  matrix(t(rmultinom(2 * N, 1, hapDist)) %*% t(t(names)), ncol = 2)
dt2alleles = function(dt, Nloci)
  sapply(dt, int2bin, digits = Nloci)
dt2gt = function(dt, Nloci)
  apply(dt2alleles(dt, Nloci), 2, sum)
dts2gts = function(dts, Nloci)
  t(apply(dts, 1, dt2gt, Nloci = Nloci))
simulateGenotypes = function(N, Nloci, hapDist)
  dts2gts(simulateDiplotypes(N, hapDist), Nloci)
\end{lstlisting}

\subsection{EM algorithm}

The EM-algorithm takes substantially more code as diplotypes (pairs of haplotypes) compatible with genotypes have to be reconstructed.

The main loop of the EM algorithm iterates E- and M-steps. {\it gts} is the matrix of genotypes, {\it Nitmax} is the maximal number of iterations and {\it eps} is the convergence tolerance (some code omitted).

\begin{lstlisting}[language=R]
htFreqEstEM = function(gts, eps = 1e-5, Nitmax = 1e2) {
  # ... function setup (10 lines)
  for (i in 1:Nitmax) {
    # E-step
    dtfs = htfs2dtfs(htfs, dtfI);
    dtCont = colStd(t(dtsI) * dtfs); # E-step
    # M-step
    htfsN = vn(apply(t(dtCont) %*% Mdt2ht, 2, sum));
    if (max(abs(htfsN - htfs)) <= eps) break;
    htfs = htfsN;
  }
  return(list(htfs = htfsN, converged = i < Nitmax, eps = eps, iterations = i));	
}
\end{lstlisting}

The reconstruction code {\it htfs2dtfs} requires 10 helper functions and about 100 lines of code. Helper function $gts2dts$ (compute all diplotypes compatible with given genotypes as explicit pairs of haplotypes), $dtsapply$ (iterate results from $gts2dts$), $Npairs$ (number of unordered pairs for $N$ objects), $merge.multi.list$ (compute cartesian product of lists), and $array.extract$ (extract elements from array by index tuples) have been omitted.

\begin{lstlisting}[language=R]
# diplotype reconstruction as indicator matrix of possible diplotypes
dtsReconstruct = function(gts) {
	Nloci = ncol(gts);
	templ = rep(0, Npairs(2^Nloci));
	dtsR = gts2dts(snpGtsSplit(gts));
	dtsEnc = dtsapply(dtsR, gtFromPair0, Nalleles = 7);
	dtsI = t(sapply(dtsEnc, vector.assign, e = 1, v = templ));
	return(dtsI);
}
# to which haplotypes does a given diplotype contribute?
dtContribMat = function(Nloci) {
	Nhts = 2^Nloci;
	Ndts = Npairs(Nhts);
	templ = rep(0, 2^Nloci);
	dt2ht = apply(sapply(1:Ndts, pairFromGt, Nalleles = Nhts), 2, vector.assign, e = 1, v = templ);
	dt2ht2 = t(dt2ht) * 2 / apply(dt2ht, 2, sum);	# normalized to two alleles
	return(dt2ht2);
}
# map diplotype indices to indeces of Nhts x Nhts diplotype frequency matrix
dtFreqExtraction = function(Nhts) {
	combs = merge.multi.list(list(list(1:Nhts), list(1:Nhts)));
	combs = combs[combs[, 1] <= combs[, 2], ];
	gts = apply(combs, 1, gtFromPair, Nalleles = Nhts);	
	return(combs[order(gts), ]);
}
# express diplotype frequncies in a Nhts x Nhts matrix, assuming HWE
htfs2dtfs = function(htfs, dtfI) {
	dtfsRaw = htfs %*% t(htfs);
	dtfs = array.extract(2*dtfsRaw - diag(diag(dtfsRaw)), dtfI[, 1], dtfI[, 2]);
	return(dtfs);
}
# standardize matrix by column
colStd = function(m)t(t(m) / apply(m, 2, sum))
rowStd = function(m)t(colStd(t(m)))
\end{lstlisting}




\end{document}